\theoremstyle{plain}%
 \newtheorem{theorem}{Theorem}
 \newtheorem{lemma}{Lemma}%
\theoremstyle{remark}
\theoremstyle{definition}
\begin{document}

\begin{center}
{\Large On a conjecture on a series of convergence rate $\frac{1}{2}$}
\end{center}

\begin{center}
{\textsc{John M. Campbell} } 

 \ 

\end{center}

\begin{abstract}
 Sun, in 2022, introduced a conjectured evaluation for a series of convergence rate $\frac{1}{2}$ involving harmonic numbers. We prove both this 
 conjecture and a stronger version of this conjecture, using a summation technique based on a beta-type integral we had previously introduced. Our full 
 proof also requires applications of Bailey's 
 ${}_{2}F_{1}\left( \frac{1}{2} \right)$-formula, 
 Dixon's ${}_{3}F_{2}(1)$-formula, an almost-poised version of Dixon's formula due to Chu, Watson's formula for ${}_{3}F_{2}(1)$-series, 
 the Gauss summation theorem, Euler's formula for ${}_{2}F_{1}$-series, elliptic integral singular values, and lemniscate-like constants recently introduced 
 by Campbell and Chu. The techniques involved in our proof are useful, more broadly, in the reduction of difficult sums of convergence rate $\frac{1}{2}$ 
 to previously evaluable expressions. 
\end{abstract}

\noindent {\footnotesize \emph{MSC:} 33C20, 33C75}

\noindent {\footnotesize \emph{Keywords:} harmonic number; hypergeometric series; symbolic evaluation; closed form; digamma function}

\section{Introduction}
 Zhi-Wei Sun has introduced many remarkable conjectures, over the years. Many of these conjectures are given by purported evaluations for very 
 difficult series that were discovered in an experimental fashion, with Computer Algebra System software and via numerical estimates. 
 Recently, Sun \cite{Sun2023} posted a preprint on series with summands involving harmonic numbers, and the purpose of our article is to prove one of 
 the conjectures given by Sun in this recent preprint \cite{Sun2023}. 

 Conjecture 2.4 from \cite{Sun2023} was formulated in the following manner by Sun \cite{Sun2023}, 
 and it was indicated \cite{Sun2023} that this Conjecture was introduced in December of 2022. 
 We are to let $H_{m} = 1 + \frac{1}{2} + \cdots + \frac{1}{m}$ denote the $m^{\text{th}}$ 
 entry in the sequence of harmonic numbers. 
 The $\Gamma$-function \cite[\S8]{Rainville1960} 
 is of great importance inside of mathematics and outside of mathematics 
 and is to be heavily involved in our article and may be defined via the Euler integral 
 so that $\Gamma(x)=\int_{0}^{\infty} u^{x-1}e^{-u}\,du$ for $\Re(x) > 0$. 

 \ 

\noindent {\bf Conjecture 2.4} from \cite{Sun2023}: We have 
\begin{equation}\label{Sun1}
 \sum_{k=0}^{\infty} \frac{ \binom{2k}{k}^{2} }{(-16)^{k}} (2 H_{2k} - H_{k}) 
 = -\frac{\ln(2) \, \Gamma^{2}\left( \frac{1}{4} \right)}{ 4 \pi \sqrt{2\pi} } 
\end{equation}
 and 
\begin{equation}\label{Sun2}
 \sum_{k=0}^{\infty} \frac{\binom{2k}{k}^{2}}{32^{k}} (2 H_{2k} - H_{k}) 
 = \frac{\ln(2) \, \Gamma^{2}\left( \frac{1}{4} \right)}{4 \pi \sqrt{\pi}}. 
\end{equation}
 
 \ 

 As it turns out, the alternating series evaluation in \eqref{Sun1} can be shown to follow in a direct way from results obtained via a hypergeometric 
 linearization method given by Chu and Campbell in \cite{ChuCampbell2021JMAA}, as we are to briefly demonstrate, later in this article. However, the 
 problem of proving the evaluation in \eqref{Sun2} 
 turns out to be much more difficult. We succeed in proving \eqref{Sun2}
 in this article, as in Section \ref{202390939939049594PM1A} below. 
 To the best of our knowledge, Sun's conjectured formula in \eqref{Sun2} 
 had not previously been proved. 

\subsection{Background}
 Using a linearization method based on coefficient extractions, Chu 
 \cite{Chu2022} recently applied this method to 
 obtain identities for evaluating series involving 
 $\frac{\binom{2k}{k}^2}{32^k}$ for $k \in \mathbb{N}_{0}$ together with harmonic-type numbers, 
 building on the beta integral-derived results previously given by Campbell in \cite{Campbell2019Rocky}. 
 However, the methds of Chu 
 \cite{Chu2022} cannot be applied, at least in any direct way, to prove \eqref{Sun2}. 
 In particular, the hypergeometric techniques due to Chu \cite{Chu2022} were applied in \cite{Chu2022} 
 to evaluate series of convergence rate $\frac{1}{2}$ involving 
\begin{equation}\label{2025350553350515130PM1A}
 \left( \frac{1}{32} \right)^{k} \binom{2k}{k}^{2} H_{k} 
\end{equation}
 for $k \in \mathbb{N}_{0}$, such as the formula 
\begin{equation}\label{2707283708838317172717477AM1A}
 \sum_{k=0}^{\infty} \left( \frac{1}{32} \right)^{k} 
 \binom{2k}{k}^2 \frac{H_{k}}{k+1} 
 = 8 - \frac{2\Gamma^{2}\left( \frac{1}{4} \right)}{\pi^{3/2}} - 
 \frac{4 \pi^{3/2} + 16 \sqrt{\pi} \ln(2)}{\Gamma^{2}\left( \frac{1}{4} \right)} 
\end{equation}
 previously introduced by Campbell \cite{Campbell2019Rocky}, 
 and series of convergence rate $\frac{1}{2}$ involving 
\begin{equation}\label{260762370637637071151PM1A}
 \left( \frac{1}{32} \right)^{k} \binom{2k}{k}^{2} O_{k}^{(2)}
\end{equation}
 for $k \in \mathbb{N}_{0}$, 
 writing $O_{k}^{(2)} = \sum_{i=1}^{k} \frac{1}{(2i-1)^2}$. 
 However, by expanding the summand of \eqref{Sun2}, 
 we would need to evaluate a series of convergence rate $\frac{1}{2}$ involving 
\begin{equation}\label{2888808283890989938939890989198185828P8M1A}
 \left( \frac{1}{32} \right)^{k} \binom{2k}{k}^{2} H_{2k} 
\end{equation}
 for $k \in \mathbb{N}_{0}$, in contrast to both \eqref{2025350553350515130PM1A} and \eqref{260762370637637071151PM1A}. 

 There have been only a few previously known series of convergence rate $\frac{1}{2}$ involving 
 \eqref{2888808283890989938939890989198185828P8M1A}, as in our previous work 
 on Fourier--Legendre theory and fractional calculus \cite{CampbellCantariniDAurizio2022}. 
 In particular, it was proved in \cite{CampbellCantariniDAurizio2022} that 
\begin{align}
 & \sum_{k = 0}^{\infty} \left( \frac{1}{32} \right)^{k} 
 \binom{2k}{k}^2 \frac{H_{2k}}{2k-1} = \label{202303311205AM1A} \\ 
 & \frac{\sqrt{\pi}\left( \pi + 3 \ln(2) - 4\right)}{2 \Gamma^{2}\left( \frac{1}{4} \right)} - 
 \frac{ \Gamma^{2}\left( \frac{1}{4} \right) \left( \pi - 3 \ln(2) - 2\right) }{16 \pi^{3/2}} \label{202303311205AM2A}
\end{align}
 and that 
\begin{equation}\label{7207277303371717205AM1A}
 \sum_{k = 0}^{\infty} 
 \left( \frac{1}{32} \right)^{k} \binom{2k}{k}^2 \frac{H_{2k}}{k+1} 
 = 4-\frac{3 \Gamma^2 \left(\frac{1}{4}\right)}{2 \pi ^{3/2}} - \frac{2 \sqrt{\pi } (\pi +3 \ln (2)-4)}{\Gamma^2 
 \left(\frac{1}{4}\right)}, 
\end{equation}
 and the results in \eqref{202303311205AM1A}--\eqref{202303311205AM2A} 
 and in \eqref{7207277303371717205AM1A} were highlighted as main results in \cite{CampbellCantariniDAurizio2022}. 
 Our approach toward solving the problem proposed by Sun given by proving 
 \eqref{Sun2} is of a similar nature relative to our past proofs of 
 \eqref{202303311205AM1A}--\eqref{202303311205AM2A} 
 and \eqref{7207277303371717205AM1A}, 
 but our proof of \eqref{Sun2} is considerably more involved 
 and requires recent results on what are referred to as \emph{lemniscate-like constants} in \cite{CampbellChu2021}. 

 As considered by Chu \cite{Chu2022}, the very fast convergence rate of series as in 
 \eqref{2707283708838317172717477AM1A} is of interest, especially in comparison with 
 previously known series of convergence rate $1$ involving $\frac{\binom{2k}{k}^2}{16^{k}}$ for $k \in \mathbb{N}_{0}$
 and harmonic numbers, as in the formula 
 $$ \sum_{k=0}^{\infty} \left( \frac{1}{16} \right)^{k} \binom{2k}{k}^2 \frac{H_{k}}{(2k-1)^2} 
 = \frac{12 - 16 \ln(2)}{\pi} $$ 
 proved by Choi in 2014 \cite{Choi2014} and independently by Chen in 2016 \cite{Chen2016}, which was later generalized by Campbell \cite{Campbell2018} 
 and by Wang and Chu \cite{WangChu2020}. 
 The foregoing considerations greatly motivate the development of new techniques for evaluating 
 series involving expressions as in \eqref{2025350553350515130PM1A}. 

\subsection{Preliminaries}
 One of the key tools that we are to apply to prove Sun's conjectured formula shown in \eqref{Sun2} is given by the beta-type integration technique 
 introduced in the author's previous publication \cite{Campbell2019Mediterr} and reproduced in the author's PhD Thesis \cite{Campbell2022PhD}. We are 
 to let $H_{m}' = 1 - \frac{1}{2} + \cdots + \frac{(-1)^{m+1}}{m}$ denote the $m^{\text{th}}$ entry in the sequence of alternating harmonic numbers, 
 recalling the relation such that $H_{2n}' = H_{2n} - H_{n}$. 

\begin{lemma}\label{mainlemH2np}
 Given a sequence $(f_{n} )_{n \geq 0}$ whereby the series 
\begin{equation}\label{20180919855PM1A}
 \sum_{n = 0 }^{\infty} 
 \left( \frac{1}{16} \right)^{n} H_{2n}' \binom{2n}{n}^{2} \frac{f_n}{n+1} 
\end{equation}
 converges, the above series is equal to 
\begin{align}
 & \frac{4}{\pi} \int_{0}^{1} 
 \sum_{n = 0}^{\infty} (-1)^{n} x^{2n} \sqrt{1-x^{2}}
 \binom{-\frac{1}{2}}{n} f_{n} \ln\left(x\right) \, dx \label{20220520911PM1A} \\ 
 & + \frac{1}{2} \sum _{n=0}^{\infty } 
 \left( \frac{1}{16} \right)^{n} \frac{ \binom{2 n}{n}^2 ( 2 \ln (2) ( n + 1) + 1)}{(n+1)^2} f_{n},
\end{align}
 under the assumption that the sequence $f$ is such that it is possible to reverse 
 the order of integration and infinite summation in \eqref{20220520911PM1A} \cite{Campbell2019Mediterr,Campbell2022PhD}. 
\end{lemma}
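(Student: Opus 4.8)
The plan is to start from the generating-function identity
\[
\sum_{n=0}^{\infty} (-1)^n x^{2n} \binom{-\frac12}{n} = \frac{1}{\sqrt{1+x^2}},
\]
which is the special case $f_n \equiv 1$ of the series under the integral sign in \eqref{20220520911PM1A}, and to recognize that the summand of \eqref{20180919855PM1A} arises by integrating a product of $\binom{2n}{n}/4^n$-type terms against a beta kernel. Concretely, I would recall the classical beta integral
\[
\int_0^1 x^{2n}\sqrt{1-x^2}\,dx = \frac{\pi}{4}\cdot\frac{1}{4^n}\binom{2n}{n}\frac{1}{n+1}
\]
(equivalently a value of $B\!\left(n+\tfrac12,\tfrac32\right)$), so that $\tfrac{4}{\pi}\int_0^1 x^{2n}\sqrt{1-x^2}\,dx$ reproduces exactly $\bigl(\tfrac14\bigr)^{n}\binom{2n}{n}\tfrac1{n+1}\cdot\bigl(\tfrac{1}{4^n}\binom{2n}{n}\bigr)$. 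The extra factor $\binom{2n}{n}/4^n$ needed to reach $\binom{2n}{n}^2/16^n$ is precisely the coefficient $(-1)^n\binom{-1/2}{n}=\binom{2n}{n}/4^n$, which is why the differentiated kernel carries that binomial. Thus the strategy is: insert the factor $\ln(x)$ as a derivative with respect to a parameter, and show that differentiation of the beta integral produces the harmonic-number combination $H_{2n}'=H_{2n}-H_n$ together with the correction terms in the second line.

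The key steps, in order, are as follows. First, I would write $x^{2n}\sqrt{1-x^2}=x^{2(n+s)}\sqrt{1-x^2}$ and compute $\int_0^1 x^{2(n+s)}\sqrt{1-x^2}\,dx = \tfrac12 B\!\left(n+s+\tfrac12,\tfrac32\right)$ as a function of $s$. Second, I would differentiate this in $s$ at $s=0$: the factor $\partial_s x^{2(n+s)}\big|_{s=0} = 2 x^{2n}\ln x$ on the left matches the $\ln(x)$ in \eqref{20220520911PM1A} (up to the constant), while on the right the derivative of $\tfrac12 B\!\left(n+s+\tfrac12,\tfrac32\right)$ produces, via $\psi$-function identities, a term proportional to $\psi\!\left(n+\tfrac12\right)-\psi(n+2)$ plus $\psi$-values at fixed points. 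Third, I would convert the digamma values: using $\psi\!\left(n+\tfrac12\right)=\psi\!\left(\tfrac12\right)+2H_{2n}-H_n$ and $\psi(n+2)=\psi(1)+H_{n+1}$, the combination collapses to $2H_{2n}-H_n = H_{2n}'$ together with elementary rational and $\ln 2$ pieces (the $\psi\!\left(\tfrac12\right)=-\gamma-2\ln2$ value is where the $2\ln(2)(n+1)+1$ factor originates). Fourth, I would multiply through by $f_n$, sum over $n$, and invoke the stated hypothesis that summation and integration may be interchanged to move the $\sum_n$ inside the integral in \eqref{20220520911PM1A}; collecting the non-integral $\psi$-contributions gives exactly the closed sum in the second displayed line, with the $\tfrac12\,\binom{2n}{n}^2/16^n/(n+1)^2$ weight emerging from $B\!\left(n+\tfrac12,\tfrac32\right)$ itself.

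The main obstacle I expect is the bookkeeping in the third step: one must carefully track the constant terms ($\gamma$, $\psi(1)$, $\psi(3/2)$, the $\ln 2$ arising from $\psi(1/2)$) so that all $\gamma$'s cancel and the surviving constant reassembles precisely into the coefficient $2\ln(2)(n+1)+1$ over $(n+1)^2$, rather than some other combination. A secondary subtlety is justifying that the $s$-differentiation commutes with the integral near $s=0$ (dominated convergence on $[0,1]$, using that $x^{2(n+s)}\sqrt{1-x^2}|\ln x|$ is integrable uniformly for $s$ in a neighborhood of $0$); this is routine but should be stated. Once the single-$n$ identity
\[
\left(\frac{1}{16}\right)^{n} H_{2n}'\binom{2n}{n}^2\frac{1}{n+1}
= \frac{4}{\pi}\int_0^1 (-1)^n x^{2n}\sqrt{1-x^2}\binom{-\frac12}{n}\ln(x)\,dx
+ \frac12\left(\frac{1}{16}\right)^{n}\frac{\binom{2n}{n}^2(2\ln(2)(n+1)+1)}{(n+1)^2}
\]
is in hand, the lemma follows by multiplying by $f_n$, summing, and interchanging — exactly as in \cite{Campbell2019Mediterr,Campbell2022PhD}.
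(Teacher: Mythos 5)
Your proposal is correct: the differentiated beta integral $\tfrac{d}{ds}\tfrac12 B\bigl(n+s+\tfrac12,\tfrac32\bigr)\big|_{s=0}$ together with $\psi\bigl(n+\tfrac12\bigr)-\psi(n+2)=2H_{2n}'-\tfrac{1}{n+1}-2\ln 2$ does yield exactly the stated single-$n$ identity (I checked the constants), and this is essentially the beta-type integration technique from the cited sources that the paper invokes for this lemma. No gaps beyond the routine justifications you already flag.
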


The evaluation of hypergeometric series is of central importance in our article, 
 so we find it to be appropriate to recall the following 
 definition of the term \emph{generalized hypergeometric series}: 
\begin{equation}\label{9209230939399191294858PM1A}
{}_{p}F_{q}\!\!\left[ 
\begin{matrix} 
a_{1}, a_{2}, \ldots, a_{p} \vspace{1mm}\\ 
b_{1}, b_{2}, \ldots, b_{q} 
\end{matrix} \ \Bigg| \ x \right] 
= \sum_{n=0}^{\infty} 
\left[ \begin{matrix} a_{1}, a_{2}, \ldots, a_{p} \vspace{1mm} \\ 
b_{1}, b_{2}, \ldots, b_{q} \end{matrix} \right]_{n} \frac{x^{n}}{n!}. 
\end{equation}
 The \emph{complete elliptic integrals} are to play a significant role in our proof of \eqref{Sun2}. 
 For the purposes of our article, the complete elliptic integrals $\text{{\bf K}}$ and $\text{{\bf E}}$
 of the first and second kinds may be defined via the following Maclaurin series expansions, 
 with reference to the classic \emph{Pi and the AGM} text \cite[pp.\ 8--10]{BorweinBorwein1987}: 
\begin{align}
 & \text{{\bf K}}(k) = 
 \frac{\pi}{2} \cdot {}_{2}F_{1}\!\!\left[ 
 \begin{matrix}
 \frac{1}{2}, \frac{1}{2} \vspace{1mm}\\
 1
 \end{matrix} \ \Bigg| \ k^2 \right], \label{20210716754PM1A} \\
 & \text{{\bf E}}(k) = \frac{\pi}{2} \cdot {}_{2}F_{1}\!\!\left[ 
 \begin{matrix}
 \frac{1}{2}, -\frac{1}{2} \vspace{1mm}\\ 1
 \end{matrix} \ \Bigg| \ k^2 \right]. \label{20210716755PM2A}
\end{align}
 The following elliptic integral singular values highlighted as Theorem 1.7 in the \emph{Pi and the AGM} text 
 \cite[p.\ 25]{BorweinBorwein1987} are to also be applied in our proof 
 of \eqref{Sun2}: 
\begin{align} 
 & \text{{\bf K}}\left( \frac{1}{\sqrt{2}} \right) = \frac{\Gamma^{2}\left( \frac{1}{4} 
 \right)}{4\sqrt{\pi}}, \label{20239093390900PM1A} \\
 & \text{{\bf E}}\left( \frac{1}{\sqrt{2}} \right) = 
 \frac{\Gamma^2 \left(\frac{1}{4}\right)}{8 \sqrt{\pi }} + 
 \frac{\pi ^{3/2}}{\Gamma^2 \left(\frac{1}{4}\right)}. \label{20239093390900PM2A} 
\end{align}
 Writing $k' = \sqrt{1 - k^2}$, we are to also employ 
 the following differential equations \cite[p.\ 10]{BorweinBorwein1987}: 
\begin{equation}\label{7207273707373707875747P7M1A}
 \frac{d\text{{\bf E}}}{dk} 
 = \frac{ \text{{\bf E}} - \text{{\bf K}} }{k} 
 \ \ \ \text{and} \ \ \ 
 \frac{d\text{{\bf K}}}{dk} = \frac{\text{{\bf E}} - (k')^2 \text{{\bf K}}}{k (k')^2}. 
\end{equation}
 We are to make use of the \emph{Gauss's summation theorem} \cite[\S1.3]{Bailey1935} such that
\begin{equation}\label{2023033017171757P77M1A}
 {}_{2}F_{1}\!\!\left[ 
 \begin{matrix} 
 a, b \vspace{1mm}\\ c 
 \end{matrix} \ \Bigg| \ 1 \right] 
 = \Gamma \left[ \begin{matrix} c, c - a - b \vspace{1mm} \\ 
 c-a, c - b \end{matrix} \right] 
\end{equation}
 for $\Re(c - a - b) > 0$, writing 
\begin{equation*}
 \Gamma \left[ \begin{matrix} \alpha, \beta, \ldots, \gamma \vspace{1mm} \\ 
 A, B, \ldots, C \end{matrix} \right] = 
 \frac{ \Gamma(\alpha) \Gamma(\beta) 
 \cdots \Gamma(\gamma) }{ \Gamma(A) \Gamma(B) \cdots \Gamma(C)}. 
\end{equation*}
 
 The \emph{Euler--Mascheroni constant} is such that $ \gamma = \lim_{n\to\infty}\big(H_n-\ln n\big)$. The \emph{digamma function} is 
 the special function such that \cite[\S9]{Rainville1960}: 
\begin{equation}\label{psidef}
 \psi(z) = \frac{d}{dz}\ln\Gamma(z)=\frac{\Gamma'(z)}{\Gamma(z)} =-\gamma+\sum_{n=0}^{\infty}\frac{z-1}{(n+1)(n+z)}. 
\end{equation}

 Euler's formula for ${}_{2}F_{1}$-series \cite[p.\ 57]{Luke1969} is such that 
\begin{align*}
 & {}_{2}F_{1}\!\!\left[ 
 \begin{matrix} 
 a, b \vspace{1mm} \\ 
 c 
 \end{matrix} \ \Bigg| \ z \right] = \Gamma \left[ \begin{matrix} c \vspace{1mm} \\ 
 c-a, a \end{matrix} \right] 
 \int_{0}^{1} t^{a-1} (1-t)^{c-a-1} (1 - z t)^{-b} \, dt, 
\end{align*}
 for $\Re(c) > \Re(a) > 0$ and for $|\text{arg}(1-z)| < \pi$. 
 Following \cite[p.\ 111]{Luke1969}, Euler's formula may be applied 
 to obtain hypergeometric reduction formula 
\begin{equation}\label{transform3F22F1}
 z {}_{3}F_{2}\!\!\left[ 
 \begin{matrix}
 a, b, 1 \vspace{1mm} \\ 
 c, 2 
 \end{matrix} \ \Bigg| \ z \right] 
 = \frac{c-1}{(a-1)(b-1)} 
 \left( {}_{2}F_{1}\!\!\left[ 
 \begin{matrix} 
 a - 1, b - 1 \vspace{1mm} \\ 
 c - 1 
 \end{matrix} \ \Bigg| \ z \right] - 1\right)
\end{equation}
 given in \cite[p.\ 111]{Luke1969}. So, 
 the Gauss identity in \eqref{2023033017171757P77M1A} gives us, from \eqref{transform3F22F1}, that 
\begin{equation}\label{202388038380818182717P7M1A}
 {}_{3}F_{2}\!\!\left[ 
 \begin{matrix} 
 a, b, 1 \vspace{1mm} \\ 
 c, 2 
 \end{matrix} \ \Bigg| \ 1 \right] 
 = \frac{c-1}{(a-1)(b-1)} 
 \left( \Gamma \left[ \begin{matrix} c-1, c-a-b+1 \vspace{1mm} \\ 
 c-a, c-b \end{matrix} \right] - 1\right) 
\end{equation}
 for $a \neq 1$, $b \neq 1$, and $\Re(c-a-b) > -1$ \cite[p.\ 111]{Luke1969}. 
 By L'H\^{o}pital's rule, we may obtain from \eqref{202388038380818182717P7M1A} that 
\begin{equation}\label{3F2digamma}
 {}_{3}F_{2}\!\!\left[ 
 \begin{matrix} 
 a, 1, 1 \vspace{1mm} \\ 
 c, 2 
 \end{matrix} \ \Bigg| \ 1 \right] 
 = \frac{c-1}{a-1} \left( \psi(c-1) - \psi(c-a) \right)
\end{equation}
 for $a \neq 1$ and $\Re(c-a) > 0$ \cite[p.\ 111]{Luke1969}. 
 The identity in \eqref{3F2digamma}
 is to be required in a direct way in our proof of \eqref{Sun2}. 

 The evaluation 
\begin{equation}\label{3F2CCD}
 {}_{3}F_{2}\!\!\left[ 
 \begin{matrix} 
 \frac{1}{2}, 1, \frac{5}{4} \vspace{1mm} \\ 
 \frac{3}{2}, \frac{7}{4} 
 \end{matrix} \ \Bigg| \ 1 \right] = 3 - \frac{6\pi^3}{\Gamma^{4}\left( \frac{1}{4} \right)} 
\end{equation}
 was introduced by Campbell et al.\ in \cite{CampbellCantariniDAurizio2022}
 and highlighted as part of a main Theorem from \cite{CampbellCantariniDAurizio2022}. 
 This evaluation in \eqref{3F2CCD} is to provide a key tool 
 in our proof of Sun's conjectured formula in \eqref{Sun2}. 
 The ${}_{3}F_{2}(1)$-evaluation in \eqref{3F2CCD} 
 was proved in \cite{CampbellCantariniDAurizio2022} using FL theory and fractional operators. 
 The evaluation in \eqref{3F2CCD} was applied in a similar context in 
 \cite{CampbellChu2021}, in which \eqref{3F2CCD} was proved using
 a reindexing argument together with Watson's formula for ${}_{3}F_{2}(1)$-series \cite[\S1.3]{Bailey1935}: 
\begin{equation}\label{99920293909331929295PM1A}
 {}_{3}F_{2}\!\!\left[ 
 \begin{matrix} 
 a, b, c \vspace{1mm} \\ 
 \frac{a + b + 1}{2}, 2 c 
 \end{matrix} \ \Bigg| \ 1 \right] = 
 \Gamma \left[ \begin{matrix} \frac{1}{2}, c + \frac{1}{2}, \frac{a+b+1}{2}, c - \frac{a + b - 1}{2} \vspace{1mm} \\ 
 \frac{a+1}{2}, \frac{b+1}{2}, c - \frac{a-1}{2}, c - \frac{b-1}{2} \end{matrix} \right]. 
\end{equation}
 Indeed, by following \cite{CampbellChu2021}, we may apply a reindexing argument to the series in 
 \eqref{3F2CCD} so that 
\begin{align*}
 {}_{3}F_{2}\!\!\left[ 
 \begin{matrix} 
 \frac{1}{2}, 1, \frac{5}{4} \vspace{1mm} \\ 
 \frac{3}{2}, \frac{7}{4} 
 \end{matrix} \ \Bigg| \ 1 \right] 
 & = \sum_{k=0}^{\infty} \frac{ \left( \frac{1}{2} \right)_{k} 
 \left( \frac{5}{4} \right)_{k} }{ \left( \frac{3}{2} \right)_{k} \left( \frac{7}{4} \right)_{k} } \\ 
 & = -3 \sum_{k = 1}^{\infty} \frac{ \left( -\frac{1}{2} \right)_{k} 
 \left( \frac{1}{4} \right)_{k} }{ \left( \frac{1}{2} \right)_{k} \left( \frac{3}{4} \right)_{k} } \\ 
 & = 3 \left( 1 - {}_{3}F_{2}\!\!\left[ 
 \begin{matrix} 
 1, -\frac{1}{2}, \frac{1}{4} \vspace{1mm} \\ 
 \frac{1}{2}, \frac{3}{4} 
 \end{matrix} \ \Bigg| \ 1 \right] \right), 
\end{align*}
 and so that the formulation of Watson's identity in \eqref{99920293909331929295PM1A}
 then gives us the desired symbolic form in \eqref{3F2CCD}. 

 We let $$ \beta(x, y) = \int_0^1 t^{x-1} (1-t)^{y-1} \, dt $$ 
 denote the beta function for $\Re(x) > 0$ and $\Re(y) > 0$. 
 The classical \emph{lemniscate constants} 
\begin{equation}\label{lemniscateA}
 A = \frac{1}{4} \beta\left(\frac{1}{2},\frac{1}{4}\right) = \int_0^1 \frac{1}{\sqrt{1-t^4}} \, dt = 
 \sum _{n=0}^{\infty } \left(\frac{1}{4}\right)^n \binom{2 n}{n} \frac{ 1 }{4 n+1} = 
 \frac{\Gamma^2 \left(\frac{1}{4}\right)}{4 \sqrt{2 \pi }} 
\end{equation}
 and 
\begin{equation}\label{lemniscateB}
 B = \frac{1}{4} \beta\left(\frac{1}{2},\frac{3}{4}\right) = 
 \int_0^1 \frac{t^2}{\sqrt{1-t^4}} \, dt 
 = \sum _{n=0}^{\infty } \left(\frac{1}{4}\right)^n \binom{2 n}{n} \frac{1}{4 n+3} 
 = \frac{\sqrt{2 \pi ^3}}{\Gamma^2 \left(\frac{1}{4}\right)} 
\end{equation}
 have been of much significance in the history of mathematics \cite{CampbellChu2021,Todd1975}, 
 and this led to the exploration of \emph{lemniscate-like constants} of the following forms \cite{Campbell2021Integers,CampbellChu2021}, for a suitable 
 sequence $f$: 
\begin{equation}\label{20230331401PM1A}
 \sum _{n=0}^{\infty } \left(\frac{1}{4}\right)^n \binom{2 n}{n} \frac{ f_{n} }{4 n+1} 
 \ \ \ \text{and} \ \ \ 
 \sum _{n=0}^{\infty } \left(\frac{1}{4}\right)^n \binom{2 n}{n} \frac{ f_{n} }{4 n+3}. 
\end{equation}
 Series as in \eqref{20230331401PM1A} are to be heavily used in our main proof in Section \ref{202390939939049594PM1A}. 

 \emph{Dixon's formula} for well-poised series is such that 
\begin{equation}\label{Dixonmain}
 {}_{3}F_{2}\!\!\left[ 
 \begin{matrix} 
 a, b, c \vspace{1mm} \\ 
 1 + a - b, 1 + a - c 
 \end{matrix} \ \Bigg| \ 1 \right] = 
 \Gamma \left[ \begin{matrix} 1 + \frac{a}{2}, 1 + \frac{a}{2} - b - c, 
 1 + a -b, 1 + a - c \vspace{1mm} \\ 
 1 + a, 1 + a - b - c, 1 + \frac{a}{2} - b, 1 + \frac{a}{2} - c \end{matrix} \right]. 
\end{equation}
 Chu \cite{Chu2012WWD} introduced extended versions of {Watson}--{Whipple}--{Dixon} {{\(_{3}F_{2}\)}}-series, and we are to apply the following 
 almost-poised version of Dixon's formula \cite{Chu2012WWD} as in \cite{CampbellChu2021}: 
\begin{align*}
 & {}_{3}F_{2}\!\!\left[ \begin{matrix} a, b, c \vspace{1mm} \\ 
 2 + a - b, 2 + a - c \end{matrix} \ \Bigg| \ 1 \right] = \\ 
 & \frac{2^{1 + 2a - 2 b - 2 c} \Gamma(a - b + 2) \Gamma(a - c + 2)}{\pi (b - 1) (1 - c) \Gamma(a) \Gamma(a - 2 b +2) \Gamma(a - 2 c + 2) 
 \Gamma(a - b - c + 2)} \\ 
 & \Bigg( \Gamma\left( \frac{1 + a}{2} \right) \Gamma\left( \frac{2 + a }{2} - b \right) \Gamma\left( \frac{2 + a}{2} - c \right) \Gamma\left( \frac{5 + 
 a}{2} - b - c \right) \\ 
 & - \Gamma\left( \frac{a}{2} \right) \Gamma\left( \frac{3 + a}{2} - b \right) \Gamma\left( \frac{3 + a}{2} - c \right) \Gamma\left( \frac{4 + 
 a}{2} - b - c \right) \Bigg). 
\end{align*}
 A direct application of the above hypergeometric identity gives us that: 
\begin{equation}\label{directalmostpoised}
 \sum_{n=0}^{\infty} 
 \left( \frac{1}{4} \right)^{n} 
 \binom{2n}{n} \frac{1}{(4n+3)^2} = 
 \frac{1}{9} \, 
 {}_{3}F_{2}\!\!\left[ 
 \begin{matrix} 
 \frac{1}{2}, \frac{3}{4}, \frac{3}{4} \vspace{1mm} \\ 
 \frac{7}{4}, \frac{7}{4} 
 \end{matrix} \ \Bigg| \ 1 \right] = 
 \frac{4 - \pi}{4 \sqrt{2 \pi}} \Gamma^{2}\left( \frac{3}{4} \right). 
\end{equation}
 The lemniscate-like constant evaluation such that 
\begin{equation}\label{Slovaca2}
 \sum_{n=0}^{\infty} \left( \frac{1}{4} \right)^{n} 
 \binom{2n}{n} \frac{O_{2n}}{4n+3} 
 = \frac{\pi^{3/2}(3 \ln(2) + 2)}{2 \sqrt{2} \Gamma^{2}\left( \frac{1}{4} \right)} 
\end{equation}
 and 
\begin{equation}\label{Slovaca1}
 \sum_{k=0}^{\infty} \left( \frac{1}{4} \right)^{k} \binom{2k}{k} 
 \frac{O_{2k}}{4k+1} = \frac{3 \Gamma^{2}\left( \frac{1}{4} \right) \ln(2) }{16 \sqrt{2\pi}} 
\end{equation}
 were proved in \cite{CampbellChu2021} and included as main results, 
 and we are to apply \eqref{Slovaca2} and \eqref{Slovaca1}
 in our main proof, letting $O_{m} = 1 + \frac{1}{3} + \cdots + \frac{1}{2m-1}$ denote the $m^{\text{th}}$ odd harmonic number. 

 \emph{Bailey's theorem} \cite[p.\ 11]{Bailey1935} is such that 
\begin{equation}\label{20283880383817881707P7M1A}
 {}_{2}F_{1}\!\!\left[ 
 \begin{matrix} 
 a, 1-a \vspace{1mm} \\ c 
 \end{matrix} \ \Bigg| \ \frac{1}{2} \right] = 
 \Gamma \left[ \begin{matrix} \frac{c}{2}, \frac{c+1}{2} \vspace{1mm} \\ 
 \frac{a+c}{2}, \frac{1 - a + c}{2} \end{matrix} \right]. 
\end{equation}
 Tauraso \cite{Tauraso2018} obtained the following, by setting $a = \frac{1}{2}$ in 
 \eqref{20283880383817881707P7M1A} and via a term-by-term application 
 of the operator $\frac{\partial}{\partial c} \cdot \big|_{c=1}$:
\begin{equation}\label{mainTauraso}
 \sum_{k = 0}^{\infty} \left( \frac{1}{32} \right)^{k} \binom{2k}{k}^2 H_{k} 
 = \frac{\sqrt{\pi}(\pi - 4 \ln(2))}{2 \Gamma^{2}\left( \frac{3}{4} \right)}. 
\end{equation}
 However, the application of operators such as $\frac{\partial}{\partial c} \cdot \big|_{c=1/2}$ have the effect of reducing the power of central binomial 
 coefficients, so it is unclear as to how it may be possible to mimic Tauraso's approach in the hope of evaluating the following intractable series: $$ 
 \sum_{k=0}^{\infty} \left( \frac{1}{32} \right)^{k} \binom{2k}{k}^2 H_{2k}. $$ 

\section{Proof of a conjectured evaluation for   a   series of convergence rate $\frac{1}{2}$}\label{202390939939049594PM1A}

\begin{theorem}
 The formula in \eqref{Sun2} conjectured by Sun holds true. 
\end{theorem}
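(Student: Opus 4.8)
The plan is to reduce \eqref{Sun2} to the evaluation of a single logarithmic, elliptic-type integral via Lemma \ref{mainlemH2np}, and then to extract that integral in closed form with the hypergeometric and elliptic-integral machinery recorded above. First, from $H_{2k}' = H_{2k} - H_{k}$ one has $2H_{2k} - H_{k} = 2H_{2k}' + H_{k}$ (equivalently $2H_{2k}-H_{k} = 2O_{k}$, since $O_{k} = H_{2k} - \tfrac12 H_{k}$), so the left-hand side of \eqref{Sun2} equals $2\sum_{k\ge 0}\frac{\binom{2k}{k}^2}{32^k}H_{2k}' + \sum_{k\ge 0}\frac{\binom{2k}{k}^2}{32^k}H_{k}$. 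The second series is Tauraso's \eqref{mainTauraso}; rewriting $\Gamma^{2}\!\left(\tfrac34\right) = 2\pi^{2}/\Gamma^{2}\!\left(\tfrac14\right)$ by the reflection formula, it equals $\frac{(\pi - 4\ln 2)\,\Gamma^{2}(1/4)}{4\pi^{3/2}}$. Hence \eqref{Sun2} is equivalent to the claim $\sum_{k\ge 0}\frac{\binom{2k}{k}^2}{32^k}H_{2k}' = \frac{(5\ln 2 - \pi)\,\Gamma^{2}(1/4)}{8\pi^{3/2}}$.

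Next I would apply Lemma \ref{mainlemH2np}. Matching $\left(\tfrac1{16}\right)^{n}\binom{2n}{n}^{2}\tfrac{f_{n}}{n+1} = \tfrac{\binom{2n}{n}^2}{32^n}$ forces $f_{n} = \tfrac{n+1}{2^{n}}$. Using $(-1)^{n}\binom{-1/2}{n} = \binom{2n}{n}/4^{n}$ together with the generating function $\sum_{n\ge 0}(n+1)\binom{2n}{n}z^{n} = (1-2z)(1-4z)^{-3/2}$ evaluated at $z = x^{2}/8$, the integral in \eqref{20220520911PM1A} collapses to $\tfrac{2\sqrt 2}{\pi}\int_{0}^{1}\frac{(4-x^{2})\sqrt{1-x^{2}}}{(2-x^{2})^{3/2}}\ln x\, dx$, while the remaining term of Lemma \ref{mainlemH2np} becomes $\ln 2\sum_{n\ge0}\frac{\binom{2n}{n}^2}{32^n} + \tfrac12\sum_{n\ge0}\frac{\binom{2n}{n}^2}{32^n(n+1)}$. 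Both are instances of Bailey's theorem \eqref{20283880383817881707P7M1A}: with $(a,c)=\left(\tfrac12,1\right)$ and $(a,c)=\left(\tfrac12,2\right)$ respectively (and $\Gamma\!\left(\tfrac54\right)=\tfrac14\Gamma\!\left(\tfrac14\right)$), they evaluate to $\frac{\Gamma^{2}(1/4)}{2\pi^{3/2}}$ and $\frac{8\sqrt\pi}{\Gamma^{2}(1/4)}$.

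The crux is to evaluate $\mathcal I := \int_{0}^{1}\frac{(4-x^{2})\sqrt{1-x^{2}}}{(2-x^{2})^{3/2}}\ln x\, dx$, and the main obstacle is that a naive term-by-term integration is circular: it simply reproduces $\sum_{k\ge0}\frac{\binom{2k}{k}^2}{32^k}O_{k}$, the very quantity we are chasing. Instead I would substitute $x^{2}\mapsto t$ and use the decomposition $\frac{4-x^{2}}{(2-x^{2})^{3/2}} = \frac{1}{\sqrt 2}\bigl[(1-\tfrac{x^{2}}{2})^{-1/2} + (1-\tfrac{x^{2}}{2})^{-3/2}\bigr]$, so that $\mathcal I$ becomes a linear combination of $\partial_{a}$-derivatives at $a=\tfrac12$ of $\int_{0}^{1}t^{a-1}(1-t)^{1/2}(1-\tfrac t2)^{-c}\,dt = \beta\!\left(a,\tfrac32\right){}_{2}F_{1}\!\left[c,a;a+\tfrac32;\tfrac12\right]$ for $c\in\{\tfrac12,\tfrac32\}$, by Euler's integral formula. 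Pfaff's transformation converts each such ${}_{2}F_{1}\!\left(\tfrac12\right)$ into a ${}_{2}F_{1}(-1)$ whose value and first $\partial_{a}$-derivative at $a=\tfrac12$ are classically accessible, so the parameter-derivatives are controlled; after reindexing, $\mathcal I$ is reduced to a combination of: Tauraso's series and the known evaluations \eqref{2707283708838317172717477AM1A}, \eqref{7207277303371717205AM1A}, and \eqref{202303311205AM1A}--\eqref{202303311205AM2A}; complete elliptic integral values at modulus $\tfrac1{\sqrt 2}$, handled through \eqref{20239093390900PM1A}--\eqref{20239093390900PM2A} and the derivative relations \eqref{7207273707373707875747P7M1A}; the ${}_{3}F_{2}(1)$-reduction \eqref{3F2digamma} coming from Gauss's theorem \eqref{2023033017171757P77M1A} and \eqref{202388038380818182717P7M1A}; the ${}_{3}F_{2}(1)$-evaluations \eqref{3F2CCD} and \eqref{directalmostpoised} (via Dixon's formula \eqref{Dixonmain}, Chu's almost-poised version of Dixon's formula, and Watson's formula \eqref{99920293909331929295PM1A}); and the lemniscate-like constants \eqref{Slovaca1}--\eqref{Slovaca2}.

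Finally I would substitute the closed form of $\mathcal I$ and the two Bailey evaluations back into the output of Lemma \ref{mainlemH2np} to obtain $\sum_{k\ge0}\frac{\binom{2k}{k}^2}{32^k}H_{2k}'$, and then recombine with \eqref{mainTauraso} as in the first step; a last simplification with the reflection formula yields $\sum_{k\ge0}\frac{\binom{2k}{k}^2}{32^k}(2H_{2k} - H_{k}) = \frac{\ln 2\,\Gamma^{2}(1/4)}{4\pi\sqrt\pi}$, which is \eqref{Sun2}. The hard part is the third step, namely producing a genuinely non-circular closed form for $\mathcal I$, where the interplay between the elliptic-integral singular values and the lemniscate-like constants is what makes the computation go through.
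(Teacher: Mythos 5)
Your architecture is the paper's: the same choice $f_{n}=2^{-n}(n+1)$ in Lemma \ref{mainlemH2np}, the same collapse of the integrand to $\frac{(4-x^{2})\sqrt{1-x^{2}}}{(2-x^{2})^{3/2}}\ln x$, the same intermediate target $\sum_{k\ge 0}32^{-k}\binom{2k}{k}^{2}H_{2k}'=\frac{(5\ln 2-\pi)\Gamma^{2}(1/4)}{8\pi^{3/2}}$, and the same final recombination with Tauraso's \eqref{mainTauraso}; your use of Bailey's theorem for the non-integral term (in place of the paper's generating function \eqref{9928808230383808885898P8M1A} with the singular values) is a correct and harmless variant. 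The genuine gap is in the crux step, the closed form for $\mathcal I$. Writing $\mathcal I$ (up to elementary constants) as $\partial_{a}\big|_{a=1/2}\bigl[\beta\left(a,\tfrac32\right){}_{2}F_{1}\left[a,c;a+\tfrac32;\tfrac12\right]\bigr]$ for $c\in\{\tfrac12,\tfrac32\}$ only helps if you have a closed form for that ${}_{2}F_{1}$ valid for $a$ in a \emph{neighbourhood} of $\tfrac12$, and Pfaff does not supply one: the transformed series ${}_{2}F_{1}\left[a,a+\tfrac32-c;a+\tfrac32;-1\right]$ satisfies Kummer's well-poised condition only at isolated values of $a$ (e.g.\ at $a=\tfrac12$ for $c=\tfrac12$, and not at all near $a=\tfrac12$ for $c=\tfrac32$), so there is no parametric closed form to differentiate. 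If you instead expand the parameter derivative termwise, $\partial_{a}\beta\left(a+n,\tfrac32\right)\big|_{a=1/2}$ produces $\psi\left(n+\tfrac12\right)-\psi(n+2)=2O_{n}-H_{n+1}-2\ln 2$, and since $2O_{n}=2H_{2n}-H_{n}$ the $c=\tfrac32$ component regenerates $\sum_{n}32^{-n}\binom{2n}{n}^{2}(2H_{2n}-H_{n})$ itself --- exactly the circularity you set out to avoid. So the claim that ``the parameter-derivatives are controlled'' is unjustified, and the long list of tools you then invoke is never actually reached by the route you describe.

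The missing idea is the paper's non-circular device: after splitting $\frac{4-x^{2}}{(2-x^{2})^{3/2}}$ exactly as you do, substitute $u=1-x^{2}$, so that $\ln x=\tfrac12\ln(1-u)$, and then expand the \emph{logarithm} rather than the algebraic factor --- via the Maclaurin series of $\ln(1-u)$ for the $(2-x^{2})^{-1/2}$ piece, and via the moment $\int_{0}^{1}u^{n+1/2}\ln(1-u)\,du$ in terms of $O_{n+1}$ (together with the Cauchy-product expansion \eqref{useCauchy}) for the $(2-x^{2})^{-3/2}$ piece. This converts everything into series with a \emph{single} central binomial coefficient over $4^{n}$ against $\frac{1}{4n+1}$, $\frac{1}{4n+3}$, $\frac{1}{(4n+3)^{2}}$ and $O_{2n}$, i.e.\ precisely the lemniscate-like constants \eqref{Slovaca1}--\eqref{Slovaca2} and \eqref{directalmostpoised}, plus the digamma and Watson-derived evaluations \eqref{3F2digamma} and \eqref{3F2CCD}, all of which are known. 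You correctly identified these as the needed ingredients, but without the $u=1-x^{2}$ step your proposal has no working path to them, and the proof is not complete as written.
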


\begin{proof}
 We begin by setting $f_{n} = 2^{-n} (n+1)$ in Lemma \ref{mainlemH2np}. Lemma \ref{mainlemH2np} then gives us the equality of 
\begin{equation}\label{maindesired}
 \sum_{n = 
 0}^{\infty} \left(\frac{1}{32}\right)^n \left(H_{2 n}-H_n\right) \binom{2 n}{n}^2 
\end{equation}
 and 
\begin{align*}
 & \frac{4}{\pi } \int_0^1 \sqrt{1-x^2} \ln (x) \sum _{n = 
 0}^{\infty} \left(-\frac{x^2}{2}\right)^n \binom{-\frac{1}{2}}{n} (n+1) \, dx + \\ 
 & \frac{1}{2} \sum_{n = 0}^{\infty} 
 \left(\frac{1}{32}\right)^n \binom{2 n}{n}^2
 \frac{ 1+2 (n + 1) \ln (2)}{n+1}. 
\end{align*}
 According to the Maclaurin series expansions in \eqref{20210716754PM1A} and \eqref{20210716755PM2A} 
 along with the differential equations in \eqref{7207273707373707875747P7M1A}, 
 we may obtain the power series expansion 
\begin{equation}\label{9928808230383808885898P8M1A}
 \sum _{n=0}^{\infty } \frac{\binom{2 n}{n}^2 }{n+1} y^n 
 = \frac{\text{{\bf E}}\left(4 \sqrt{y}\right)}{4 \pi y} + 
 \frac{4 \left(1-\frac{1}{16 y}\right) \text{{\bf K}}\left(4 \sqrt{y}\right)}{\pi}. 
\end{equation}
 From the expansion in \eqref{9928808230383808885898P8M1A} together with the elliptic integral 
 singular values shown in \eqref{20239093390900PM1A} and in \eqref{20239093390900PM2A}, 
 we find that the series in \eqref{maindesired} is expressible in the following manner: 
\begin{align}
 & \frac{4}{\pi } \int_0^1 \sqrt{1-x^2} \ln (x) \sum _{n = 
 0}^{\infty} \left(-\frac{x^2}{2}\right)^n \binom{-\frac{1}{2}}{n} (n+1) \, dx + \label{2023033100000000911PM1A} \\ 
 & \frac{4 \sqrt{\pi }}{\Gamma^2 \left(\frac{1}{4}\right)} + 
 \frac{\ln (2) \Gamma^2 \left(\frac{1}{4}\right)}{2 \pi ^{3/2}}. \label{2023033100000000911PM2A} 
\end{align}
 By the generalized binomial theorem, we find that 
 \eqref{2023033100000000911PM1A}--\eqref{2023033100000000911PM2A}
 is reducible to the following: 
\begin{equation}\label{bygenbin}
 -\frac{4}{\pi \sqrt{2}} \int_0^1 \frac{\sqrt{1-x^2} \left(x^2 - 4\right) \ln (x)}{\left(2-x^2\right)^{3/2}} \, dx 
 + \frac{4 \sqrt{\pi }}{\Gamma^2 \left(\frac{1}{4}\right)} + 
 \frac{\Gamma^2 \left(\frac{1}{4}\right) \ln (2)}{2 \pi ^{3/2}}. 
\end{equation}
 So, it remains to evaluate the integral in \eqref{bygenbin}, i.e., to evaluate the following expression: 
\begin{equation}\label{inttosplit}
 \int_0^1 \frac{\sqrt{1-x^2} \left(x^2 - 4\right) \ln (x)}{\left(2-x^2\right)^{3/2}} \, dx. 
\end{equation}
 We may rewrite \eqref{inttosplit} as 
\begin{equation}\label{rewritetosplit}
 -\int_0^1 \sqrt{\frac{1-x^2}{2-x^2}} \ln (x) \, dx-2 \int_0^1 \frac{\sqrt{1-x^2} \ln (x)}{\left(2-x^2\right)^{3/2}} \, dx. 
\end{equation}
 Applying the change of variables such that $1 - x^2 = u$ to the first integral in 
 \eqref{rewritetosplit}, we obtain 
\begin{equation}\label{afterchange}
 -\frac{1}{4} \int_0^1 \frac{\sqrt{u} \ln (1-u)}{\sqrt{1-u^2}} \, du 
 - 2 \int_0^1 \frac{\sqrt{1-x^2} \ln (x)}{\left(2-x^2\right)^{3/2}} \, dx. 
\end{equation}
   By expanding the integrand factor $\ln(1-u)$ with its Maclaurin series and then integrating term-by-term   using the Dominated Convergence Theorem,    
  we may obtain the following from \eqref{afterchange}:   
\begin{equation}\label{202370737377079475PM1A}
 \frac{\sqrt{\pi}}{8} 
 \sum _{n = 1}^{\infty } \frac{1}{n } 
 \Gamma \left[ \begin{matrix} \frac{n}{2}+\frac{3}{4} \vspace{1mm} \\ 
 \frac{n}{2}+\frac{5}{4} \end{matrix} \right] - 2 
 \int_0^1 \frac{\sqrt{1-x^2} \ln (x)}{\left(2-x^2\right)^{3/2}} \, dx. 
\end{equation}
 Applying a series bisection to \eqref{202370737377079475PM1A}, we obtain 
\begin{align}
 & \frac{\sqrt{\pi }}{16} \sum _{n=1}^{\infty} \frac{1}{n} 
 \Gamma \left[ \begin{matrix} n+\frac{3}{4} \vspace{1mm} \\ 
 n+\frac{5}{4} \end{matrix} \right] 
 + \frac{\sqrt{\pi }}{8} \sum _{n=1}^{\infty} \frac{1}{2 n - 1 } \Gamma \left[ \begin{matrix} n + \frac{1}{4} \vspace{1mm} \\ 
 n + \frac{3}{4} \end{matrix} \right] - \label{20230330180827797P7M1A} \\ 
 & 2 \int_0^1 \frac{\sqrt{1-x^2} \ln (x)}{\left(2-x^2\right)^{3/2}} \, dx. \nonumber 
\end{align}
 Applying an index shift to the first series in \eqref{20230330180827797P7M1A}, we obtain: 
\begin{equation}\label{shiftto3F2}
 \sum _{n = 1}^{\infty } \frac{1}{n } 
 \Gamma \left[ \begin{matrix} n + \frac{3}{4} \vspace{1mm} \\ 
 n + \frac{5}{4} \end{matrix} \right] 
 = \frac{12 \Gamma \left(\frac{3}{4}\right) }{5 \Gamma \left(\frac{1}{4}\right)} 
 {}_{3}F_{2}\!\!\left[ 
 \begin{matrix} 
 1, 1, \frac{7}{4} \vspace{1mm}\\ 
 2, \frac{9}{4} 
 \end{matrix} \ \Bigg| \ 1 \right]. 
\end{equation}
 So, the digamma identity in \eqref{3F2digamma} from \cite[p.\ 111]{Luke1969} allows us to evaluate the ${}_{3}F_{2}$-expression in \eqref{shiftto3F2}. 
 This allows us to rewrite \eqref{afterchange} in the following manner: 
\begin{align}
 & -\frac{\pi ^{3/2} (\pi +2 \ln (2) -8)}{4 \sqrt{2} \Gamma^2 \left(\frac{1}{4}\right)} + \frac{\sqrt{\pi }}{8} \sum _{n = 
 1}^{\infty} \frac{1}{2 n-1} 
 \Gamma \left[ \begin{matrix} n+\frac{1}{4} \vspace{1mm} \\ 
 n+\frac{3}{4} \end{matrix} \right] - \label{rewriteafterchange1} \\ 
 & 2 \int_0^1 \frac{\sqrt{1-x^2} \ln (x)}{\left(2-x^2\right)^{3/2}} \, dx. \label{rewriteafterchange2} 
\end{align}
 By rewriting the infinite series in \eqref{rewriteafterchange1} according to 
 the notation in \eqref{9209230939399191294858PM1A}, 
 we find that \eqref{rewriteafterchange1}--\eqref{rewriteafterchange2} is equal to: 
\begin{align*}
 & -\frac{\pi ^{3/2} (\pi +2 \ln (2) - 8)}{4 \sqrt{2} \Gamma^2 \left(\frac{1}{4}\right)} + 
 \frac{\Gamma^2 \left(\frac{1}{4}\right)}{24 \sqrt{2 \pi }} 
 {}_{3}F_{2}\!\!\left[ 
 \begin{matrix} 
 \frac{1}{2}, 1, \frac{5}{4} \vspace{1mm} \\ 
 \frac{3}{2}, \frac{7}{4} 
 \end{matrix} \ \Bigg| \ 1 \right] - \\
 & 2 \int_0^1 \frac{\sqrt{1-x^2} \ln (x)}{\left(2-x^2\right)^{3/2}} \, dx. 
\end{align*}
 So, from the Watson-derived ${}_{3}F_{2}(1)$-evaluation on display in \eqref{3F2CCD}, we find that the integral in \eqref{inttosplit} is equal to 
 the following: 
\begin{equation}\label{oneintremains}
 \frac{\Gamma^2 \left(\frac{1}{4}\right)}{8 \sqrt{2 \pi }} - 
 \frac{\pi ^{3/2} (\pi +\ln (2)-4)}{2 \sqrt{2} \Gamma^2 \left(\frac{1}{4}\right)}-2 \int_0^1 
 \frac{\sqrt{1-x^2} \ln (x)}{\left(2-x^2\right)^{3/2}} \, dx. 
\end{equation}
 So, it remains to evaluate the integral in \eqref{oneintremains}. Using a change of variables, we obtain that: 
 $$ \int_0^1 \frac{\sqrt{1-x^2} \ln (x)}{\left(2-x^2\right)^{3/2}} \, dx
 = \frac{1}{4} \int_0^1 \frac{\sqrt{u} \ln (1-u)}{(1+u)^{3/2} \sqrt{1-u}} \, du. $$
 Using an appropriate Cauchy product, we may obtain that 
\begin{equation}\label{useCauchy}
 \left( \left( \frac{d}{du} \right)^{n} \frac{1}{(u+1) \sqrt{1-u^2}} \right) \, \Bigg|_{u=0} 
 = \left(-\frac{1}{2}\right)^n (n+1)! \binom{n}{\left\lfloor \frac{n}{2}\right\rfloor }. 
\end{equation}
 So, from the Maclaurin series corresponding to \eqref{useCauchy}, we may obtain that 
\begin{align}
 & \int_0^1 \frac{\sqrt{1-x^2} \ln (x)}{\left(2-x^2\right)^{3/2}} \, dx = \label{applymoment1} \\ 
 & \frac{1}{4} \int_0^1 \left(\sum _{n = 
 0}^{\infty} \left(-\frac{1}{2}\right)^n \binom{n}{\left\lfloor \frac{n}{2}\right\rfloor } (n+1) u^{n+\frac{1}{2}}
 \ln (1-u)\right) \, du. \label{applymoment2}
\end{align}
 According to a standard moment formula for the digamma function, we have that:
\begin{equation}\label{momentpsi}
 \int_0^1 u^{n+\frac{1}{2}} \ln (1-u) \, du 
 = -\frac{2 \left(\psi\left(n+\frac{5}{2}\right)+\gamma \right)}{2 n+3}. 
\end{equation}
 According to the expansion formula for the digamma function shown in \eqref{psidef}, we may obtain from \eqref{momentpsi} that 
\begin{equation}\label{momentfinite}
 \int_0^1 u^{n+\frac{1}{2}} \ln (1-u) \, du 
 = \frac{4 \ln (2) - 4 O_{n+1} - \frac{4}{2 n+3}}{2 n+3}. 
\end{equation}
 According to the Dominated Convergence Theorem, we may reverse the order of integration and infinite summation with respect to the equality in 
 \eqref{applymoment1}--\eqref{applymoment2}, 
 so as to give us the following, being consistent with the notation in \eqref{momentfinite}: 
\begin{align}
 & \int_0^1 \frac{\sqrt{1-x^2} \ln (x)}{\left(2-x^2\right)^{3/2}} \, dx = \nonumber \\ 
 & \frac{1}{4} \sum _{n = 
 0}^{\infty } \left(-\frac{1}{2}\right)^n \binom{n}{\left\lfloor \frac{n}{2}\right\rfloor }
 \frac{ (n+1) \left(4 
 \ln (2)-4 O_{n+1} -\frac{4}{2 n+3}\right)}{2 n+3}. \nonumber
\end{align}
 Applying a series bisection, we obtain that 
\begin{align}
 & \int_0^1 \frac{\sqrt{1-x^2} \ln (x)}{\left(2-x^2\right)^{3/2}} \, dx = \nonumber \\ 
 & \frac{1}{4} \sum_{n = 
 0}^{\infty } \left(\frac{1}{4}\right)^n \binom{2 n}{n}
 \frac{ (2 n+1) \left(4 \ln (2) - 4 
 O_{2 n+1}-\frac{4}{2 (2 
 n)+3}\right)}{2 (2 n)+3} - \\ 
 & \frac{1}{4} \sum _{n = 
 0}^{\infty} \left(\frac{1}{2}\right)^{2 n+1} \binom{2 n+1}{n}
 \frac{ ((2 n+1)+1) \left(4 \ln (2)-4 
 O_{(2 n+1)+1} - 
 \frac{4}{2 (2 n+1)+3}\right)}{2 (2 n+1)+3}. \nonumber 
 \end{align}
 Equivalently, 
\begin{align*}
 & \int_0^1 \frac{\sqrt{1-x^2} \ln (x)}{\left(2-x^2\right)^{3/2}} \, dx = \\
 & -\frac{1}{8} \sum_{n = 
 0}^{\infty} \left(\frac{1}{4}\right)^n \binom{2 n}{n}
 \frac{1}{4 n+1}+\left(-1-\frac{\ln (2)}{2}\right) \sum _{n=0}^{\infty} 
 \left(\frac{1}{4}\right)^n \binom{2 n}{n} \frac{ 1 }{4 n+3} + \\ 
 & \frac{13+12 \ln (2)}{8} \sum _{n=0}^{\infty} 
 \left(\frac{1}{4}\right)^n \binom{2
 n}{n} \frac{1}{4 n + 
 5}+\frac{1}{2} \sum _{n=0}^{\infty} \left(\frac{1}{4}\right)^n \binom{2 n}{n} 
 \frac{1}{(4 n+3)^2} - \\ 
 & \frac{3}{2} \sum_{n=0}^{\infty} 
 \left(\frac{1}{4}\right)^n \binom{2 n}{n} 
 \frac{1}{(4 n+5)^2} + 
 \frac{1}{2} \sum_{n=0}^{\infty} 
\left(\frac{1}{4}\right)^n \binom{2 n}{n} 
 \frac{ O_{2 n}}{4 n+3} - \\
 & \frac{3}{2} \sum_{n=0}^{\infty} \left(\frac{1}{4}\right)^n \binom{2 n}{n}
 \frac{ O_{2 n}}{4 n+5}. 
\end{align*}
 From \eqref{lemniscateA} and \eqref{lemniscateB}, we may obtain that 
\begin{align*}
 & \int_0^1 \frac{\sqrt{1-x^2} \ln (x)}{\left(2-x^2\right)^{3/2}} \, dx = \\ 
 & -\frac{\Gamma^2 \left(\frac{1}{4}\right)}{32 \sqrt{2 \pi }}+\frac{\left(-1-\frac{\ln (2)}{2}\right) \sqrt{2 \pi ^3}}{\Gamma^2 
 \left(\frac{1}{4}\right)} + \\
 & \frac{13+12 \ln (2)}{8} \sum_{n=0}^{\infty} \left(\frac{1}{4}\right)^n \binom{2 n}{n}
 \frac{1}{4 n+5} + \frac{1}{2} \sum_{n = 
 0}^{\infty} \left(\frac{1}{4}\right)^n \binom{2 n}{n} \frac{1}{(4 n+3)^2} - \\ 
 & \frac{3}{2} \sum _{n=0}^{\infty} \left(\frac{1}{4}\right)^n \binom{2 
 n}{n} \frac{1}{(4 n+5)^2} + 
 \frac{1}{2} \sum_{n=0}^{\infty} \left(\frac{1}{4}\right)^n \binom{2 n}{n}
 \frac{O_{2 n}}{4 n+3} - \\
 & \frac{3}{2} \sum_{n=0}^{\infty} \left(\frac{1}{4}\right)^n \binom{2 n}{n} \frac{O_{2 n}}{4 n+5}. 
\end{align*}
 From the lemniscate-like constant evaluations in \eqref{directalmostpoised} and \eqref{Slovaca2}, we obtain that 
 \begin{align*} 
 & \int_0^1 \frac{\sqrt{1-x^2} \ln (x)}{\left(2-x^2\right)^{3/2}} \, dx = \\
 & -\frac{\Gamma^2 \left(\frac{1}{4}\right)}{32 \sqrt{2 \pi }} + 
 \frac{(4-\pi ) \Gamma^2 \left(\frac{3}{4}\right)}{8 \sqrt{2 \pi }} + 
 \frac{\pi ^{3/2} (3 \ln (2)+2)}{4 \sqrt{2} \Gamma^2 \left(\frac{1}{4}\right) } + 
 \frac{\left(-1-\frac{\ln (2)}{2}\right) \sqrt{2 \pi ^3}}{\Gamma^2 \left(\frac{1}{4}\right)} + \\ 
 & \frac{13+12 \ln (2)}{8} \sum_{n = 
 0}^{\infty} \left(\frac{1}{4}\right)^n \binom{2 n}{n}
 \frac{1}{4 n+5} - \frac{3}{2} \sum _{n=0}^{\infty} \left(\frac{1}{4}\right)^n \binom{2 n}{n}
 \frac{1}{(4 n+5)^2} - \\
 & \frac{3}{2} \sum _{n = 0}^{\infty} \left(\frac{1}{4}\right)^n \binom{2 n}{n} \frac{O_{2 n}}{4 n+5}. 
\end{align*}
 Applying reindexing arguments, we obtain that 
\begin{align*}
 & \int_0^1 \frac{\sqrt{1-x^2} \ln (x)}{\left(2-x^2\right)^{3/2}} \, dx = \\
 & \frac{-\Gamma^4 \left(\frac{1}{4}\right)-8 \pi ^2 (2+\pi +\ln (2))}{32 \sqrt{2 \pi } \Gamma \left(\frac{1}{4}\right)^2} + \\ 
 & \frac{3+4 \ln (2)}{8} \sum_{n = 
 0}^{\infty} \left(\frac{1}{4}\right)^n \binom{2 n}{n} 
 \frac{1}{2 n-1} + \left(\frac{1}{2}+\frac{\ln (2)}{2}\right) \sum _{n=0}^{\infty} 
 \left(\frac{1}{4}\right)^n \binom{2 n}{n} 
 \frac{1}{4 n+1} - \\ 
 & \frac{1}{2} \sum_{n = 0}^{\infty} \left(\frac{1}{4}\right)^n \binom{2 n}{n} 
 \frac{1}{ (4 n + 
 1)^2} + \frac{9}{8} \sum_{n = 
 0}^{\infty} \left(\frac{1}{4}\right)^n \binom{2 n}{n} \frac{1}{ 4 n - 3} - \\ 
 & \frac{3}{4} \sum _{n=0}^{\infty} \left(\frac{1}{4}\right)^n \binom{2 n}{n} \frac{1}{ (4 n-1)} - \frac{1}{2} 
 \sum _{n=0}^{\infty} \left(\frac{1}{4}\right)^n \binom{2 n}{n}
 \frac{O_{2 n}}{2 n-1} - \\ 
 & \frac{1}{2} \sum_{n = 
 0}^{\infty} \left(\frac{1}{4}\right)^n \binom{2 n}{n} \frac{ O_{2 n}}{4 n+1}. 
\end{align*} 
 From an equivalent formulation of the generating function for the sequence of Catalan numbers, we obtain that: 
\begin{align*}
 & \int_0^1 \frac{\sqrt{1-x^2} \ln (x)}{\left(2-x^2\right)^{3/2}} \, dx = \\ 
 & \frac{-\Gamma^4 \left(\frac{1}{4}\right) - 
 8 \pi ^2 (2+\pi +\ln (2))}{32 \sqrt{2 \pi } \Gamma^2 \left(\frac{1}{4}\right)} + \\ 
 & \left(\frac{1}{2}+\frac{\ln(2)}{2}\right) \sum _{n=0}^{\infty} \left(\frac{1}{4}\right)^n \binom{2 n}{n} 
 \frac{1}{4 n+1} - \frac{1}{2} \sum _{n=0}^{\infty} 
 \left( \frac{1}{4}\right)^n \binom{2 n}{n} \frac{1}{ (4 n + 1)^2} + \\
 & \frac{9}{8} \sum _{n=0}^{\infty} \left(\frac{1}{4}\right)^n \binom{2 n}{n} 
 \frac{1}{ 4 n -3 } +\sum _{n=0}^{\infty} -\frac{3}{4} \left(\frac{1}{4}\right)^n \binom{2 n}{n}
 \frac{ 1}{ 4 n - 1} - \\
 & \frac{1}{2} \sum_{n = 
 0}^{\infty} \left(\frac{1}{4}\right)^n \binom{2 n}{n} 
 \frac{ O_{2 n}}{2 n-1} - \frac{1}{2} \sum_{n = 
 0}^{\infty} \left(\frac{1}{4}\right)^n \binom{2 n}{n} \frac{ O_{2 n}}{4 n+1}. 
\end{align*}
 From the classical lemniscate constant evaluation shown in \eqref{lemniscateA}, we obtain that 
\begin{align*}
 & \int_0^1 \frac{\sqrt{1-x^2} \ln (x)}{\left(2-x^2\right)^{3/2}} \, dx = \\ 
 & \frac{-\Gamma^4 \left(\frac{1}{4}\right) 
 - 8 \pi ^2 (2+\pi +\ln (2))}{32 \sqrt{2 \pi } 
 \Gamma^2 \left(\frac{1}{4}\right)}+\frac{\left(\frac{1}{2}+\frac{\ln (2)}{2}\right) \sqrt{\pi } 
 \Gamma \left(\frac{5}{4}\right) }{\Gamma \left(\frac{3}{4}\right)} - \\ 
 & \frac{1}{2} \sum_{n=0}^{\infty} \left(\frac{1}{4}\right)^n \binom{2 n}{n} \frac{1}{ (4 n + 1)^2} + 
 \frac{9}{8} \sum_{n = 
 0}^{\infty} \left(\frac{1}{4}\right)^n \binom{2 n}{n} \frac{1}{ 4 n-3} - \\ 
 & \frac{3}{4} \sum_{n = 
 0}^{\infty} \left(\frac{1}{4}\right)^n
 \binom{2 n}{n}
 \frac{ 1 }{ (4 n-1)}-\frac{1}{2} \sum_{n = 
 0}^{\infty} \left(\frac{1}{4}\right)^n \binom{2 n}{n}
 \frac{ O_{2 n}}{2 n-1} - \\
 & \frac{1}{2} \sum_{n=0}^{\infty} \left(\frac{1}{4}\right)^n \binom{2 n}{n} \frac{O_{2 n}}{4 n+1}. 
\end{align*}
 From Dixon's formula in \eqref{Dixonmain}, we obtain that 
\begin{align*}
 & \int_0^1 \frac{\sqrt{1-x^2} \ln (x)}{\left(2-x^2\right)^{3/2}} \, dx = \\ 
 & \frac{2 \pi \Gamma \left(-\frac{1}{4}\right) (2+\pi +\ln (2))+\sqrt{2} \Gamma \left(\frac{1}{4}\right)^3 (3-\pi + 4 \ln (2))}{64 \sqrt{\pi } \Gamma
 \left(\frac{1}{4}\right)}+ \\ 
 & \frac{9}{8} \sum_{n = 
 0}^{\infty} \left(\frac{1}{4}\right)^n \binom{2 n}{n} \frac{1}{ 4 n - 3} -\frac{3}{4} 
 \sum_{n = 0}^{\infty} \left(\frac{1}{4}\right)^n \binom{2 n}{n}
 \frac{ 1 }{ 4 n - 1} - \\ 
 & \frac{1}{2} \sum_{n = 
 0}^{\infty} \left(\frac{1}{4}\right)^n \binom{2 n}{n} 
 \frac{ O_{2 n}}{2 n-1} - 
 \frac{1}{2} \sum _{n=0}^{\infty} \left(\frac{1}{4}\right)^n \binom{2 n}{n}
 \frac{O_{2 n}}{4 n+1}. 
\end{align*}
 From the lemniscate-like constant evaluation in \eqref{Slovaca1}, we obtain that 
\begin{align*}
 & \int_0^1 \frac{\sqrt{1-x^2} \ln (x)}{\left(2-x^2\right)^{3/2}} \, dx = \\
 & \frac{\sqrt{2} \Gamma \left(\frac{1}{4}\right)^3 (3-\pi +\ln (2))+2 \pi \Gamma \left(-\frac{1}{4}\right) (2+\pi +\ln (2))}{64 \sqrt{\pi } \Gamma
 \left(\frac{1}{4}\right)} + \\ 
 & \frac{9}{8} \sum_{n = 
 0}^{\infty} \left(\frac{1}{4}\right)^n \binom{2 n}{n} \frac{1}{ 4 n -3} - \frac{3}{4} 
 \sum_{n=0}^{\infty} \left(\frac{1}{4}\right)^n \binom{2 n}{n}
 \frac{ 1}{ 4 n - 1} - \\ 
 & \frac{1}{2} \sum _{n=0}^{\infty} \left(\frac{1}{4}\right)^n \binom{2 n}{n}
 \frac{ O_{2 n}}{2 n-1}. 
\end{align*}
 Using the moment formula $$ O_{2 n}=\int_0^1 \frac{1-x^{4 n}}{1-x^2} \, dx, $$ together with the power series identity $$ \sum_{n = 0}^{\infty } 
 \left(\frac{1}{4}\right)^n \binom{2 n}{n} \frac{ 1-x^{4 n} }{(2 n-1) \left(1-x^2\right)} = -\frac{\sqrt{1-x^4}}{x^2-1}, $$ we may oobtain that $$ 
 \int_{0}^{1} -\frac{\sqrt{1-x^4}}{x^2-1} \, dx = \text{{\bf E}}(i), $$ so that we may apply a known elliptic integral singular value to evaluate the 
 remaining harmonic sum. Explicitly, 
\begin{align*}
 & \int_0^1 \frac{\sqrt{1-x^2} \ln (x)}{\left(2-x^2\right)^{3/2}} \, dx = \\ 
 & \frac{-48 \pi ^2-8 \pi ^3-8 \pi ^2 \ln (2)+\Gamma^4 \left(\frac{1}{4}\right) (-1 - \pi +\ln (2))}{32 \sqrt{2 \pi } \Gamma^2 \left(\frac{1}{4}\right)} + \\ 
 & \frac{9}{8} \sum_{n = 0}^{\infty} \left(\frac{1}{4}\right)^n \binom{2 n}{n} \frac{1}{ 4 n-3} - \frac{3}{4} \sum _{n=0}^{\infty } 
 \left(\frac{1}{4}\right)^n \binom{2 n}{n} \frac{1 }{4 n - 1}. 
\end{align*}
 Applying reindexing arguments, we obtain that 
\begin{align*}
 & \int_0^1 \frac{\sqrt{1-x^2} \ln (x)}{\left(2-x^2\right)^{3/2}} \, dx = \\ 
 & \frac{3}{8}-\frac{\pi ^{3/2} (6+\pi +\ln (2))}{4 \sqrt{2} \Gamma^2 \left(\frac{1}{4}\right)} + 
 \frac{\Gamma^2 \left(\frac{1}{4}\right) (-1-\pi +\ln
 (2))}{32 \sqrt{2 \pi }} - \\
 & \frac{3}{16} \sum_{n = 
 0}^{\infty} \left(\frac{1}{4}\right)^n \binom{2 n}{n}
 \frac{1}{n + 1} + 
 \frac{3}{8} \sum _{n=0}^{\infty}
 \left(\frac{1}{4}\right)^n \binom{2 
 n}{n} \frac{1}{4 n + 1} + \\
 & \frac{3}{4} \sum _{n = 0}^{\infty} \left(\frac{1}{4}\right)^n \binom{2 n}{n} \frac{1}{4 n+3}
\end{align*}
 Using the generating function for the sequence of Catalan numbers, together with the evaluations for the classical lemniscate constants, we obtain $$ 
 \int_0^1 \frac{\sqrt{1-x^2} \ln (x)}{\left(2-x^2\right)^{3/2}} \, dx = \frac{\Gamma^2 \left(\frac{1}{4}\right) (4-2 \pi +2 \ln (2))}{64 \sqrt{2 \pi }} - 
 \frac{\pi ^{3/2} (\pi +\ln (2))}{4 \sqrt{2} \Gamma^2 \left(\frac{1}{4}\right)}. $$ So, from \eqref{oneintremains}, we find that the integral in 
 \eqref{inttosplit} may be reduced to the following: $$ \int_0^1 \frac{\sqrt{1-x^2} \left(x^2-4\right) \ln (x)}{\left(2-x^2\right)^{3/2}} \, dx 
 = \frac{\sqrt{2} \pi ^{3/2}}{\Gamma^2 \left(\frac{1}{4}\right)}+\left(\frac{\sqrt{\frac{\pi }{2}}}{16}-\frac{\ln (2)}{16 \sqrt{2 \pi }}\right) \Gamma^2 
 \left(\frac{1}{4}\right). $$
 So, from the equality of \eqref{maindesired} and \eqref{bygenbin}, 
 we obtain the equality 
 $$ \sum _{n=0}^{\infty} \left(\frac{1}{32}\right)^n \left(H_{2 n}-H_n\right) \binom{2 n}{n}^2
 = \frac{(5 \ln (2)-\pi ) \Gamma^2 \left(\frac{1}{4}\right)}{8 \pi ^{3/2}}. $$ 
 So, from the harmonic sum evaluation in \eqref{mainTauraso}
 derived from Bailey's theorem, we obtain that: 
\begin{equation}\label{fromBaileyend}
 \sum _{k=0}^{\infty} \left(\frac{1}{32}\right)^k \binom{2 k}{k}^2 H_{2 k} 
 = \frac{(\pi -3 \ln (2)) \Gamma^2 \left(\frac{1}{4}\right)}{8 \pi ^{3/2}}. 
\end{equation}
 So, by taking an appropriate linear combination of  \eqref{fromBaileyend} and the formula in  \eqref{mainTauraso} derived from Bailey's theorem, 
 we obtain the desired result. 
\end{proof}

 The formulas $$ \sum_{k=0}^{\infty} \left( -\frac{1}{16} \right)^{k} \binom{2k}{k}^2 H_{k} = \frac{\Gamma^{2}\left( \frac{1}{4} \right)}{4 
 \sqrt{2 \pi^3}} \left( \pi - 5 \ln(2) \right)$$ and $$ \sum_{k=0}^{\infty} \left( -\frac{1}{16} \right)^{k} \binom{2k}{k}^2 H_{2k} = 
 \frac{ \Gamma^{2}\left( \frac{1}{4} \right) }{ 8 \sqrt{2 \pi^3} } \left( \pi - 6 \ln(2) \right) $$ were highlighted as part of Theorem 4 in 
 \cite{ChuCampbell2021JMAA} and proved 
 via a linearization method introduced in \cite{ChuCampbell2021JMAA}. 
 So, by taking an appropriate linear combination of the above formulas, 
 this gives us a proof of Sun's conjectured formula in \eqref{Sun1}. 

 The new formula in \eqref{fromBaileyend} is of interest in its own right and recalls the open problem recently considered in \cite{Chu2022} to evaluate 
 $$ \sum_{k=0}^{\infty} \left( \frac{1}{32} \right)^{k} \binom{2k}{k}^{2} H_{k}^{(2)} 
 \ \ \ \text{and} \ \ \ \sum_{k=0}^{\infty} \left( \frac{1}{32} \right)^{k} \binom{2k}{k}^{2} H_{k}^{2} $$ 
 symbolically, writing $H_{m}^{(2)} = 1 + \frac{1}{2^2} + \cdots + \frac{1}{m^2}$. 
 Our evaluation in \eqref{fromBaileyend}
 also motivates our interest in the open problem of evaluating 
 $$ \sum _{k=0}^{\infty} \left(\frac{1}{32}\right)^k \binom{2 k}{k}^2 H_{2 k}^{(2)} 
 \ \ \ \text{and} \ \ \ \sum _{k=0}^{\infty} \left(\frac{1}{32}\right)^k \binom{2 k}{k}^2 H_{2 k}^{2} $$
 symbolically.

 \

John M.\ Campbell

Department of Mathematics

Toronto Metropolitan University

{\tt jmaxwellcampbell@gmail.com}

\end{document}